\documentclass[11pt, oneside]{article}   	
\usepackage{geometry}                		
\usepackage{graphicx}				
\usepackage{amssymb}
\usepackage{amsmath}
\usepackage{amsthm}
\usepackage{harpoon}
\usepackage{accents}

\usepackage{tikz}  
\usepackage{float}
\restylefloat{table}
\usepackage{mathrsfs}
\usepackage{verbatim}
\usepackage{enumitem}  

\usetikzlibrary{positioning,chains,fit,shapes,calc}  

\newtheorem{theorem}{Theorem}
\newtheorem{lemma}{Lemma}
\newtheorem{definition}{Definition}

\newtheorem{remark}{Remark}

\newcommand{\ppp}{\[\begin{aligned}}
\newcommand{\ooo}{\end{aligned}\]}

\newcommand{\iter}{\mathrm{iter}_\phi}
\newcommand{\maxphi}{\mathrm{max}_\phi}
\newcommand{\maxMult}{\mathrm{maxMult}_\phi}

\begin{document}

\title{Multi-point variants of the Newton-Raphson-Simpson method arising from organizing a formal zero according to a function $\phi$ }
\author{Mario DeFranco}
\maketitle

\abstract{Fix an integer $L \geq 1$, and a function $\phi \colon \mathbb{Z}_{\geq 1} \rightarrow [0,L]$ with $\phi^{-1}(\{0\}) = \{ 1\}$. We define a multi-point variant of the Newton-Raphson-Simpson, which we call the max-phi method, as follows. We use $\phi$ to define the iteration number of a rooted plane tree. Then we construct formal series that are weighted generating functions of rooted plane trees with iteration number at most $N$. Finally we use these formulas to define the max-phi method applied to an arbitrary $L$-differentiable function.}

\section{Introduction}

The main result of this paper is the recurrence relation between certain formal series in Theorem \ref{t main}. This recurrence relation naturally suggests the definition of a multi-term variant of the Newton-Raphson-Simpson method,  which we call the max-phi method, in Definition \ref{d maxphi}. See Traub \cite{Traub} and D\u{z}uni\'{c}, Neta, Petkovi\'{c}, and Petkovi\'{c} \cite{Dzunic} for general information multi-point methods. 

In \cite{DeFranco gb}, we defined a formal series $Z$ called a formal zero of a function $f(z)$, depending on a base function $g(z)$ and base zero $\alpha$. In \cite{DeFranco ce}, we considered the formal zero depending on the base function $g(z) = a_0 + a_1z$ with base zero $\alpha=-\frac{a_0}{a_1}$, and indexed the terms of the series by the set of rooted plane trees. This is the formal zero $Z$ defined here in Definition \ref{d RPT} below.

\begin{definition} 
Fix an integer $d \geq 1$. Let $f(z)$ be the function 
\[
f(z) = \sum_{i=0}^d a_i z^i
\]
where $a_i$ are indeterminates. We let $a_i$ denote 0 for $i >d$.
\end{definition}

\begin{definition} \label{d RPT}
A rooted plane tree $T$ is either a single vertex, in which case we denote $T$ by the empty sequence $T=()$; or an ordered sequence $T=(T_1, \ldots, T_l)$ where $T_i$ are rooted plane trees and $l \geq 2$. Let $\mathrm{RootedPlaneTrees}$ denote the set of such $T$. 
Define the weight $w(T)$ by 
\[
w(T) =
 \begin{cases} 
-\frac{a_0}{a_1} &\quad \text{ if } T = () \\ 
-\frac{a_l}{a_1} \prod_{i=1}^l w(T_i) &\quad \text{ if } T = (T_1, \ldots, T_l)
\end{cases}.
\]
Let $Z$ denote the formal series 
\[
Z = \sum_{T \in \mathrm{RootedPlaneTrees}} w(T).
\]
\end{definition}

In \cite{DeFranco NRS}, we described a way of organizing the terms of that formal zero (essentially choosing an order of summation) by assigning to each tree $T$ an iteration number $\mathrm{iter}(T)$ defined next, so that trees of iteration number $N$ appeared in the $N$-th term. 

\begin{definition} \label{d iter}
For $T=()$, define $\mathrm{iter}(T) =0$. For $T = (T_1, \ldots, T_l)$, let $n = \max(\{ \mathrm{iter}(T_i) \colon 1 \leq i \leq l \})$. If there is exactly one such $i$ with $\mathrm{iter}(T_i)=n$, define $\mathrm{iter}(T) =n$. Otherwise define $\mathrm{iter}(T) =n+1$. 
\end{definition} 

We also proved that the partial sums $c_N$ in that particular order of summation satisfied a recursive equation which is identical to that of the Newton-Raphson-Simpson method: 
\[
c_N = c_{N-1} - \frac{f(c_{N-1})}{f'(c_{N-1})}.
\]
(The $c_N$ and $Z$ is well-defined as a formal series in a certain ring $R$ as proved in \cite{DeFranco NRS}, and $f(z)$ is a function $f \colon R \rightarrow R$.)
 
In this paper, we generalize Definition \ref{d iter} to Definition \ref{d iterphi} using an arbitrary function $\phi$. Then, after imposing certain conditions on $\phi$, we obtain the main result (Theorem \ref{t main}) and then define the max-phi method (Definiton \ref{d maxphi}). 
 
\section{Max-phi Methods}

For an integers $a\leq b$, let $[a,b]$ denote the set 
\[
[a,b]=\{ i \in \mathbb{Z} \colon a\leq i \leq b\}.
\]

\begin{definition} \label{d iterphi}
 Let $\phi$ be a function 
\[
\phi \colon \mathbb{Z}_{\geq 1} \rightarrow  \mathbb{Z}_{\geq 0}. 
\]
For $T \in \mathrm{RootedPlaneTrees}$, define the iteration number $\iter(T)$ of $T$ by  
\[
\iter(T) = \begin{cases} 0 &\text{ if } T=()\\ 
   \maxphi(T)+ \phi(\maxMult(T)) &\text{ if } T= (T_1, \ldots, T_l)
\end{cases}.
\]
where
\[
\maxphi(T) = \max(\{\iter(T_i) \colon 1 \leq i \leq l \})
\]
and 
\[
\maxMult(T) = \# \{i \colon  1 \leq i \leq l  \text{ and } \iter(T_i) = \maxphi(T)\}.
\]
\end{definition} 

\begin{definition} 
For an integer $N \geq 0$, let $Z_N$ denote 

\[
Z_N = \sum_{\substack{ T \in \mathrm{RootedPlaneTrees} \\ \iter(T) \leq N }} w(T) \\ 
\]

\end{definition}

\begin{lemma} \label{l krsum}
For $N\geq 0$, we have
\begin{align}
Z_N = & -\frac{a_0}{a_1}+\sum_{k=0}^{\infty} \sum_{r \in \phi^{-1}([0,k])} (Z_{N-k}-Z_{N-k-1})^r \sum_{l=2}^d { l \choose r} (-\frac{a_l}{a_1})Z_{N-k-1}^{l-r}. 
         \end{align}
\end{lemma}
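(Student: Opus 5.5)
The plan is to sort the trees in $Z_N$ by the iteration numbers of their children and then resum. The single-vertex tree $()$ has $\iter = 0 \leq N$ and contributes $-\frac{a_0}{a_1}$. Every other tree is $T=(T_1,\ldots,T_l)$ with $2\le l\le d$; for $l>d$ the weight vanishes because $a_l=0$. Its weight factors as $-\frac{a_l}{a_1}\prod_i w(T_i)$. Such a $T$ is determined by $l$ and the ordered tuple of children, so I will sum over tuples $(T_1,\ldots,T_l)$ subject to $\iter(T)\le N$.

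Write $m=\maxphi(T)$ and $r=\maxMult(T)$, and set $k=N-m$. The condition $\iter(T)\le N$ reads $m+\phi(r)\le N$, i.e. $\phi(r)\le k$, i.e. $r\in\phi^{-1}([0,k])$. Since $\phi\ge 0$, this forces $k\ge 0$, so $m$ ranges over $m=N-k$ with $k\ge0$ (and $m\ge0$). Now fix $l$, $k$ (equivalently $m$) and $r$ with $1\le r\le l$. The tuples with $\maxphi=m$ and $\maxMult=r$ are obtained as follows:
\begin{itemize}
\item choose the set of $r$ positions whose children have iteration number exactly $m$, in ${l\choose r}$ ways;
\item fill those positions with trees of $\iter = m$, whose weights sum to $Z_m-Z_{m-1}$;
\item fill the remaining $l-r$ positions with trees of $\iter\le m-1$, whose weights sum to $Z_{m-1}$.
\end{itemize}
Because the weight is multiplicative, this block contributes
\[
{l\choose r}\Bigl(-\frac{a_l}{a_1}\Bigr)(Z_{m}-Z_{m-1})^r Z_{m-1}^{l-r}.
\]
Summing over $l$, over admissible $r$, and over $k$ gives the stated formula. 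The cases $r>l$ or $r=0$ vanish automatically: ${l\choose r}=0$ for $r>l$, and $r=0\notin\mathbb{Z}_{\ge1}$ is never in the domain of $\phi$.

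Two conventions need checking. First, $Z_j=0$ for $j<0$, which is consistent since no tree has negative iteration number. Second, the terms with $k>N$ (so $m<0$) vanish because $Z_{N-k}-Z_{N-k-1}=0$ there and $r\ge1$. The case $m=0$ needs no separate treatment: $Z_{-1}=0$ makes $Z_{m-1}^{l-r}$ vanish unless $r=l$, which matches the fact that all children then have $\iter =0$. These checks also show the $k$-sum is effectively finite.

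The main obstacle is justifying the rearrangement, since $Z_N$ may be an infinite sum. Trees of iteration number $\le N$ can be arbitrarily large, for example when $\phi(1)=0$ allows long chains, although here $l\ge2$. I would therefore work in the formal-series ring $R$ of \cite{DeFranco NRS}, where $Z$ converges. In that ring every sub-sum of weights is well defined and products of such sums expand as sums over tuples. The decomposition above is then a partition of the index set $\{T:\iter(T)\le N\}$ into disjoint blocks indexed by $(l,k,r,\text{position set})$, each in weight-preserving bijection with a product of index sets. Rearranging terms is therefore legitimate by the same summability argument used for $Z$ itself.
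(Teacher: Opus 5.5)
Your proof is correct and follows the same route as the paper: split off the leaf, classify the remaining trees by $m=\maxphi(T)=N-k$ and $r=\maxMult(T)$, observe that $\iter(T)\le N$ is equivalent to $\phi(r)\le k$, and count the $\binom{l}{r}$ placements of the $r$ maximal children, which contribute weight $(Z_{N-k}-Z_{N-k-1})^r Z_{N-k-1}^{l-r}$. The extra points you check are all handled correctly, and the paper leaves them implicit: the convention $Z_j=0$ for $j<0$, the vanishing of the $k>N$ terms, the exclusion of $r=0$, and the justification of the rearrangement in the formal-series ring $R$.
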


\begin{proof}
Let $T \in \mathrm{RootedPlaneTrees}$ with 
\begin{equation} \label{ineq}
\iter(T) \leq N.
\end{equation}
 The term $ -\frac{a_0}{a_1}$ corresponds to $w(T)$ for $T=()$. Thus assume $T = (T_1, \ldots, T_l)$ with $l \geq 2$ and \[\maxphi(T)=N-k\] and \[\maxMult(T) = r\] for some $k \geq 0$ and $r \geq 1$. Then inequality \eqref{ineq} implies $\phi(r) \leq k$ i.e. $r \in \phi^{-1}([0,k])$. The factor $(Z_{N-k}-Z_{N-k-1})^r$ corresponds the weights of the $r$ trees $T_i$ with $\iter(T_i) = \maxphi(T)$ and the factor $Z_{N-k-1}^{l-r}$ corresponds to the weights of the remaining $l-i$ trees. There are ${ l \choose r}$ ways to order these trees, and $(-\frac{a_l}{a_1})$ completes the weight of $T$. This completes the proof. 
 \end{proof}

\begin{lemma} \label{l krsum L}
Suppose that the image of $\phi$ is $[0,L]$ for some integer $L \geq 1$. Then
\begin{align}
Z_N = &\sum_{k=0}^{L-1} \sum_{r \in \phi^{-1}([0,k])} (Z_{N-k}-Z_{N-k-1})^r \sum_{l=2}^d { l \choose r} (-\frac{a_l}{a_1})Z_{N-k-1}^{l-r}  \label{krsum L sum}\\
         & + (-\frac{a_0}{a_1})+\sum_{i=2}^d  (-\frac{a_i}{a_1}) Z_{N-L}^i. \label{krsum L f}
\end{align}
\end{lemma}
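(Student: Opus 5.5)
The plan is to start from Lemma \ref{l krsum} and split the sum over $k$ into the range $0 \leq k \leq L-1$ and the tail $k \geq L$. The terms with $k \leq L-1$ are exactly the double sum in \eqref{krsum L sum}, so nothing needs to be done there. Everything reduces to showing that the tail, together with $-\frac{a_0}{a_1}$, gives \eqref{krsum L f}.

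For $k \geq L$, the hypothesis that the image of $\phi$ is $[0,L]$ gives $\phi^{-1}([0,k]) = \mathbb{Z}_{\geq 1}$. So for each such $k$ the inner sum runs over all $r \geq 1$, and since $\binom{l}{r}=0$ for $r>l$ it is effectively $1 \leq r \leq l$. Interchanging the finite sum over $l$ with the sum over $r$, I would apply the binomial theorem with $x = Z_{N-k}-Z_{N-k-1}$ and $y = Z_{N-k-1}$:
\[
\sum_{r=1}^{l} \binom{l}{r} x^r y^{l-r} = (x+y)^l - y^l = Z_{N-k}^l - Z_{N-k-1}^l .
\]
The tail therefore equals
\[
\sum_{l=2}^d \left(-\frac{a_l}{a_1}\right) \sum_{k=L}^{\infty} \left( Z_{N-k}^l - Z_{N-k-1}^l \right).
\]

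The inner sum over $k$ telescopes to $Z_{N-L}^l$. This requires the convention $Z_M = 0$ for $M<0$, which is implicit in Lemma \ref{l krsum}: there are no trees of negative iteration number, so only finitely many $k$ contribute and the sum is really finite. Adding back $-\frac{a_0}{a_1}$ then gives exactly \eqref{krsum L f}.

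The only delicate point is the telescoping and its conventions. Its endpoint is $Z_{N-L}^l - \lim Z_{M}^l$, with $Z_M = 0$ for all sufficiently negative $M$. This is consistent for $N<L$, where $Z_{N-L}=0$. I would state this convention explicitly before telescoping.
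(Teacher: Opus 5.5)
Your proof is correct and follows the paper's argument. You use $\phi^{-1}([0,k]) = [1,\infty)$ for $k \geq L$, apply the binomial theorem to get $Z_{N-k}^l - Z_{N-k-1}^l$, and telescope to $Z_{N-L}^l$; stating the convention $Z_M = 0$ for $M<0$ (valid because iteration numbers are nonnegative) helpfully makes explicit what the paper leaves implicit.
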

\begin{proof}
The assumption on $\phi$ means for each $k \geq L$ that
\[
\phi^{-1}([0,k]) =  [1,\infty).
\]
From Lemma \ref{l krsum}, it thus suffices to prove  
\[
\sum_{k=L}^{\infty} \sum_{r =1}^\infty { l \choose r} (Z_{N-k}-Z_{N-k-1})^r  Z_{N-k-1}^{l-r} =   Z_{N-L}^l.
\]
By the binomial theorem, the left side is 
\begin{align*}
\sum_{k=L}^{\infty} (Z_{N-k}-Z_{N-k-1}+ Z_{N-k-1})^l  -   Z_{N-k-1}^l   = &\sum_{k=L}^{\infty} Z_{N-k}^l  -   Z_{N-k-1}^l \\ 
=& Z_{N-L}^l.
\end{align*}
 This completes the proof. 
\end{proof}

\begin{theorem} \label{t main} 
Suppose that the image of $\phi$ is $[0,L]$ for some integer $L \geq 1$ and also that $\phi^{-1}(\{ 0\} )=\{ 1\}$. Then
\begin{align}
Z_N = Z_{N-1} - \frac{f(Z_{N-L})+ \sum_{k=1}^{L-1} \sum_{r \in \phi^{-1}([0,k])} (Z_{N-k} - Z_{N-k-1})^r \frac{f^{(r)}(Z_{N-k-1})}{r!}}{f'(Z_{N-1})}. \label{main}
\end{align}
\end{theorem}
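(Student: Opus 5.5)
The plan is to start from Lemma \ref{l krsum L}, multiply through by $a_1$, and recognize each inner sum over $l$ as a Taylor coefficient $f^{(r)}(z)/r!$ with a correction from the terms $l=0,1$. We use throughout the expansion
\[
\frac{f^{(r)}(z)}{r!}=\sum_{l=0}^d {l\choose r} a_l z^{l-r}.
\]
For $r\geq 2$ the terms $l=0,1$ vanish, while for $r=1$ the term $l=1$ contributes $a_1$. Hence
\[
\sum_{l=2}^d {l\choose r}(-a_l)z^{l-r}=-\frac{f^{(r)}(z)}{r!}+a_1\,[r=1].
\]
Similarly, the line \eqref{krsum L f} times $a_1$ equals $-f(Z_{N-L})+a_1Z_{N-L}$.

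\textbf{Step 1: the $k=0$ term.} The hypothesis $\phi^{-1}(\{0\})=\{1\}$ gives $\phi^{-1}([0,0])=\{1\}$, so the $k=0$ term of \eqref{krsum L sum}, multiplied by $a_1$, is
\[
-(Z_N-Z_{N-1})\bigl(f'(Z_{N-1})-a_1\bigr).
\]
Moving it to the left side, the left side becomes
\[
a_1Z_N+(Z_N-Z_{N-1})\bigl(f'(Z_{N-1})-a_1\bigr)=(Z_N-Z_{N-1})f'(Z_{N-1})+a_1Z_{N-1}.
\]

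\textbf{Step 2: the terms $1\leq k\leq L-1$.} Since $\phi(1)=0\leq k$, the index $r=1$ always lies in $\phi^{-1}([0,k])$. Each $r=1$ summand therefore carries an extra $a_1(Z_{N-k}-Z_{N-k-1})$ beyond the $-f^{(r)}/r!$ form. Summed over $k=1,\ldots,L-1$, these extras telescope to $a_1(Z_{N-1}-Z_{N-L})$.

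\textbf{Step 3: collect and divide.} Adding this to $a_1Z_{N-L}$ from the $f$-line gives $a_1Z_{N-1}$, which cancels the $a_1Z_{N-1}$ on the left. What remains is
\[
(Z_N-Z_{N-1})f'(Z_{N-1})=-f(Z_{N-L})-\sum_{k=1}^{L-1}\sum_{r\in\phi^{-1}([0,k])}(Z_{N-k}-Z_{N-k-1})^r\frac{f^{(r)}(Z_{N-k-1})}{r!}.
\]
Dividing by $f'(Z_{N-1})$ yields \eqref{main}. This division is legitimate because $f'(Z_{N-1})=a_1(1+\cdots)$ is invertible in the ring $R$ of \cite{DeFranco NRS}, where $a_1$ is inverted and the remaining terms have positive order.

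The main obstacle is bookkeeping rather than substance: getting the $r=1$, $l=1$ correction exactly right and checking that the telescoping sum combines with the $a_1Z_{N-L}$ from the $f$-line to cancel $a_1Z_{N-1}$. I would also state explicitly the convention $Z_m=0$ for $m<0$, which the small-$N$ cases of Lemma \ref{l krsum L} already implicitly use.
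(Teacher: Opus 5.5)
Your proof is correct and takes essentially the same route as the paper's proof. Both start from Lemma \ref{l krsum L}, rewrite the inner $l$-sums as $-f^{(r)}/r!$ plus the $a_1$ correction at $r=1$, let the $r=1$ extras telescope against $Z_{N-L}$ and $Z_{N-1}$, and then divide by $f'(Z_{N-1})$. The differences are cosmetic: you clear the $a_1$ denominators first and move the $k=0$ term to the left before expanding it. You also add two points the paper leaves implicit, namely the convention $Z_m=0$ for $m<0$ and the invertibility of $f'(Z_{N-1})$.
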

\begin{proof}
 The assumption that $\phi^{-1}(\{ 0\}) = \{ 1\}$ means $1 \in \phi^{-1}([0,k])$ for all $ k \geq 0$. Also 
 \[
 \sum_{l=2}^d { l \choose r} (-\frac{a_l}{a_1})x^{l-r} = 
 \begin{cases} 
 -\frac{f^{(r)}(x)}{r!a_1} \text{ if } r \geq 2 \\ 
 1 -\frac{f'(x)}{a_1} \text{ if } r = 1
 \end{cases}.
 \]
Now in the statement of Lemma \ref{l krsum L} we make the following simplifications. Line \eqref{krsum L f} is equal to 
 \begin{align}
&-\frac{f(Z_{N-L})}{a_1}\\ 
&+ Z_{N-L} \label{tel 1}.
 \end{align}
The $k=0$ contribution at line \eqref{krsum L sum} is equal to 
\[
(Z_{N}-Z_{N-1})(1-\frac{f'(Z_{N-1})}{a_1})
\]
because the only possible $r$ is $r=1$ from the assumption $\phi^{-1}(\{ 0\}) = \{ 1\}$. 
This is equal to 
\begin{align}
Z_N(1-\frac{f'(Z_{N-1})}{a_1}) \label{Z_N term}\\ 
+Z_{N-1}\frac{f'(Z_{N-1})}{a_1} \\
 -Z_{N-1}, \label{tel 2}
\end{align}

From the sum over the remaining $k$, we separate the $r=1$ contribution as 
\[
\sum_{k=1}^{L-1}\sum_{r \in \phi^{-1}([1,k])} \left( -(Z_{N-k} - Z_{N-k-1})^r \frac{f^{(r)}(Z_{N-k-1})}{r!a_1} \right)+  (Z_{N-k} - Z_{N-k-1})(1-\frac{f'(Z_{N-k-1})}{a_1})
\] 
which is equal to 

\begin{align}
&\sum_{k=1}^{L-1}  \sum_{r \in \phi^{-1}([0,k])}-(Z_{N-k} - Z_{N-k-1})^r \frac{f^{(r)}(Z_{N-k-1})}{r!a_1} \\
+ &\sum_{k=1}^{L-1 }  (Z_{N-k} - Z_{N-k-1}). \label{tel 3}
\end{align}
The terms from lines \eqref{tel 1}, \eqref{tel 2}, \eqref{tel 3} sum to 0. 
We move line \eqref{Z_N term} to the left side of the equation, so the resulting left side then is equal to 
\[
Z_N \frac{f'(Z_{N-1})}{a_1}.
\]
Then divide by $\frac{f'(Z_{N-1})}{a_1}$. The $a_1$ factors cancel out. This completes the proof. 
\end{proof}

Equation \eqref{main} thus suggests the following definition. 
\begin{definition} \label{d maxphi}
Let $K$ denote a field and $L \geq 1$ an integer. Suppose $\phi$ is a function 
\[
\phi \colon \mathbb{Z}_{\geq 1} \rightarrow   [0,L]
\]
such that $\phi^{-1}(\{ 0\} ) = (\{ 1\})$. Suppose $f$ is a function $f \colon K \rightarrow  K$ such that its first $L$ derivatives exist. Let $(c_0, c_{-1}, \ldots c_{-L+1}) \in K^L$. 
Then for $N \geq 1$ we define the $N$-th iteration $c_N$ by 
\[
c_N = c_{N-1} - \frac{f(c_{N-L})+ \sum_{k=1}^{L-1} \sum_{r \in \phi^{-1}([0,k])} (c_{N-k} - c_{N-k-1})^r \frac{f^{(r)}(c_{N-k-1})}{r!}}{f'(c_{N-1})}
\] 
assuming $f'(c_{N-1} )\neq 0$. We call this method the \emph{max-phi} method with function $\phi$. 
\end{definition}

\begin{remark} 
When $\phi$ further satisfies 
\[
\phi^{-1}([0,k]) = [1,\lambda_k]
\]
for som integers $\lambda_k \leq \lambda_{k+1}$, then the $k$-th term in the sum  
\[
\tilde{f}(c_{N-k}; \lambda_k, c_{N-k-1}) - f(c_{N-k-1})
\] 
where 
\[
\tilde{f}(x; n, c)
\]
is the $n$-th degree Taylor polynomial approximation to $f(x)$ centered at $c$. 

When $L=1$, the only $\phi$ that satisfies the conditions is 
\[
\phi(n) = \begin{cases} 0 \text{ if } n=1 \\ 
1 \text{ if }  n \geq 2
\end{cases} 
\]
and the max-phi method for this $\phi$ is the Newton-Raphson-Simpson method. 
\end{remark} 

\section{Further work}

\begin{itemize} 

\item Obtain explicit formulas for the max-phi iterates $Z_N$ in terms of the zeros of $f(z)$. 

\item Apply the max-phi to other formal zeros that can be expressed as sums over rooted plane trees with negative vertex degree. 

\end{itemize}

\end{document}